\definecolor{webgreen}{rgb}{0,.5,0}
\def\N{{\Bbb N}}
\def\C{{\Bbb C}}
\def\1{{\bf 1}}
\newtheorem{theorem}{Theorem}[section]
\newtheorem{lemma}[theorem]{Lemma}
\begin{document}

\title{{\bf Counting $r$-tuples of positive integers with $k$-wise relatively prime components}}
\author{L\'aszl\'o T\'oth}
\date{}
\maketitle

\centerline{Journal of Number Theory {\bf 166} (2016), 105--116}

\begin{abstract} Let $r\ge k\ge 2$ be fixed positive integers. Let $\varrho_{r,k}$ denote the characteristic
function of the set of $r$-tuples
of positive integers with $k$-wise relatively prime components, that
is any $k$ of them are relatively prime. We use the convolution
method to establish an asymptotic formula for the sum
$\sum_{n_1,\ldots,n_r\le x} \varrho_{r,k}(n_1,\ldots,n_r)$ by
elementary arguments. Our result improves the error term obtained by
J.~Hu \cite{Hu2013}.
\end{abstract}

{\bf Keywords:} $k$-wise relatively prime integers; asymptotic
density; multiplicative function of several variables; convolution
method; error term

{\bf 2010 Mathematics Subject Classification:} 11A05, 11A25, 11N37

\section{Introduction}

Let $r\ge k\ge 2$ be fixed positive integers. The positive integers
$n_1,\ldots,n_r$ are called $k$-wise relatively prime if any $k$ of
them are relatively prime, that is $\gcd(n_{i_1},\ldots,n_{i_k})=1$
for every $1\le i_1<\ldots<i_k\le r$. In particular, in the case
$k=2$ the integers are pairwise relatively prime and for $k=r$ they
are mutually relatively prime.

Let ${\cal{S}}_{r,k}$ denote the set of $r$-tuples of positive
integers with $k$-wise relatively prime components and let
$\varrho_{r,k}$ stand for its characteristic function. What is the
asymptotic density
\begin{equation*}
d_{r,k}= \lim_{x\to \infty} \frac1{x^r} \sum_{n_1,\ldots,n_r\le x}
\varrho_{r,k}(n_1,\ldots,n_r)
\end{equation*}
of the set ${\cal{S}}_{r,k}$? Heuristically, the probability that a
positive integer is divisible by a fixed prime $p$ is $1/p$, hence
the probability that given $r$ positive integers exactly $j$ of them
are divisible by $p$ is
\begin{equation*}
\binom{r}{j}\frac1{p^j} \left(1-\frac1{p}\right)^{r-j}
\end{equation*}
and the probability that they are $k$-wise relatively prime is
\begin{equation} \label{P_rk}
P_{r,k}= \prod_p \sum_{j=0}^{k-1} \binom{r}{j} \frac1{p^j}
\left(1-\frac1{p}\right)^{r-j}.
\end{equation}

In the case $k=2$ the above heuristic argumentation is given in
\cite[p.\ 55]{Sch2009} and one has
\begin{equation} \label{dens_k2}
P_{r,2}= \prod_p \left(1-\frac1{p} \right)^{r-1} \left(1+\frac{r-1}{p}\right).
\end{equation}

Note that for every $r\ge k\ge 2$,
\begin{equation*}
c \prod_{p>r-1} \left(1-\frac{(r-1)^2}{p^2}\right) \le
 P_{r,2} \le P_{r,k} \le P_{r,r} = \prod_p \left(1-\frac1{p^r}\right),
\end{equation*}
with some constant $c>0$ (depending on $r$), hence the infinite product \eqref{P_rk} converges. Some approximate values of $P_{r,k}$
are shown by the next Table.

\[
\vbox{\offinterlineskip \hrule \hrule \halign{ \strut \vrule \vrule \hfill $\ # \
$ \hfill & \vrule \vrule \hfill $\ # \ $ \hfill & \vrule
\hfill $\ # \ $ \hfill & \vrule \hfill $\ # \ $ \hfill & \vrule \hfill $\ # \ $ \hfill
& \vrule \hfill $\ # \ $ \hfill & \vrule
 \hfill $\ # \ $  \hfill & \vrule \hfill $\ # \ $ \hfill
\vrule \vrule \cr
 P_{r,k} & k=2 & k=3 & k=4 & k=5 & k=6 & k=7 & k=8\ \cr \noalign{\hrule \hrule}
 r=2 & 0.607 &  &  &  &  & & \ \cr \noalign{\hrule}
 r=3 & 0.286 & 0.831 &  & & & &  \ \cr \noalign{\hrule}
 r=4 & 0.114 & 0.584 & 0.923 &  & & &  \ \cr \noalign{\hrule}
 r=5 & 0.040 & 0.357 & 0.768 & 0.964 & & &  \ \cr \noalign{\hrule}
 r=6 & 0.013 & 0.195 & 0.576 & 0.873  & 0.982 & & \ \cr \noalign{\hrule}
 r=7 & 0.004 & 0.097 & 0.394 & 0.734 & 0.930 & 0.991 & \ \cr \noalign{\hrule}
 r=8 & 0.001 & 0.045 & 0.247 & 0.573 & 0.837 & 0.962 & 0.995 \ \cr} \hrule \hrule}
\]
\centerline{Table. Approximate values of $P_{r,k}$ for $2\le k\le
r\le 8$}

\medskip

If $k=r$, then it is well known that $d_{r,r}=P_{r,r}=1/\zeta(r)$ is
the correct value of the corresponding asymptotic density. The case
$k=2$ was treated by the author \cite{Tot2002} proving by an
inductive approach that
\begin{equation} \label{as_Toth}
\sum_{n_1,\ldots,n_r\le x} \varrho_{r,2}(n_1,\ldots,n_r) = d_{r,2}
x^r + O\left(x^{r-1} (\log x)^{r-1}\right),
\end{equation}
where $d_{r,2}=P_{r,2}$ is given by \eqref{dens_k2}. Here and throughout the paper the $O$ ($\ll$) notation
is used in the usual way, the implied constants depend only on $r$.

The value \eqref{dens_k2} was also deduced
by J.-Y.~Cai, E.~Bach \cite[Th.\ 3.3]{CaiBac2003} using
probabilistic arguments. P.~Moree \cite[Th.\ 2]{Mor2014} proved
\eqref{as_Toth} in the case $r=3$ using a different approach. J.~Hu
\cite{Hu2013,Hu2014} proved that $d_{r,k}=P_{r,k}$ for every $r\ge
k\ge 2$. In fact, by generalizing the method of \cite{Tot2002} it
was shown in \cite{Hu2013} that
\begin{equation} \label{as_Hu}
\sum_{n_1,\ldots,n_r\le x} \varrho_{r,k}(n_1,\ldots,n_r) = P_{r,k}
x^r + O\left(x^{r-1} (\log x)^{\delta_{r,k}}\right),
\end{equation}
where $\delta_{r,k}=\max \left\{\binom{r-1}{j}: 1\le j\le k-1
\right\}$. For $k=2$ the asymptotic formula \eqref{as_Hu} reduces to
\eqref{as_Toth}. We remark that the  asymptotic density $d_{r,2}$
was obtained by the author \cite[Sect.\ 7.2]{Tot2014} by applying
the generalized Wintner theorem due to N.~Ushiroya \cite{Ush2012}.

Similar questions were investigated in some other recent papers. J.~Hu \cite{Hu2014g}
and J.~A.~de Reyna, R.~Heyman \cite{RH2015} considered modified pairwise
coprimality conditions and by using certain graph representations
they obtained asymptotic formulas similar to \eqref{as_Hu}.
Probabilistic aspects of pairwise coprimality were investigated by
J.~L.~Fern\'andez, P.~Fern\'andez \cite{FF2015}. For example, it is
proved in \cite{FF2015} that the random variable counting the number
of coprime pairs in a random sample of length $r$, drawn from
$\{1,2,\ldots,n\}$, is asymptotically normal as $r$ tends to
infinity and $n\ge 2$ is allowed to vary with $r$. X.~Guo,
H.~Xiangqian, X.~Liu \cite{GHX2014} computed the asymptotic density
of the set of $n$-tuples of $k$-wise relatively prime polynomials
over a finite field.

It is the goal of the present paper to use a method, which differs
from all approaches mentioned above, in order to establish the
asymptotic formula \eqref{as_Hu} with a better error term. More
exactly, we take into account that the function
$\varrho_{r,k}(n_1,\ldots,n_r)$ is multiplicative, viewed as an
arithmetic function of $r$ variables. Therefore, its multiple
Dirichlet series can be expressed as an Euler product and an
explicit formula can be given for it. See the survey paper of the
author \cite{Tot2014} for basic properties of multiplicative
functions of several variables. Then we use the convolution method
to obtain the desired asymptotic formula by elementary arguments.

\section{Main results}

We use the notation $n=\prod_p p^{\nu_p(n)}$ for the prime power
factorization of $n\in \N$, the product being over the primes $p$,
where all but a finite number of the exponents $\nu_p(n)$ are zero.
Furthermore, let $e_j(x_1,\dots,x_r)=\sum_{1\le i_1<\ldots<i_j\le r}
x_{i_1}\cdots x_{i_j}$ denote the elementary symmetric polynomials
in $x_1,\ldots,x_r$ of degree $j$ ($j\ge 0$). By convention,
$e_0(x_1,\ldots,x_r)=1$.

As mentioned in the Introduction, the function $\varrho_{r,k}$ is
multiplicative, which means that
\begin{equation*}
\varrho_{r,k}(m_1n_1,\ldots,m_rn_r)=\varrho_{r,k}(m_1,\ldots,m_r)
\varrho_{r,k}(n_1,\ldots,n_r),
\end{equation*}
provided that $\gcd(m_1\cdots m_r,n_1\cdots n_r)=1$. Hence we have
\begin{equation*}
\varrho_{r,k}(n_1,\ldots,n_r)= \prod_p \varrho_{r,k}
(p^{\nu_p(n_1)}, \ldots,p^{\nu_p(n_r)})
\end{equation*}
for every $n_1,\ldots,n_r$. Also, for every $\nu_1,\ldots,\nu_r\ge
0$,
\begin{equation} \label{varrho_val}
\varrho_{r,k}(p^{\nu_1}, \ldots,p^{\nu_r})= \begin{cases} 1, &
\text{if there are at most $k-1$ values $\nu_i\ge 1$}, \\ 0, &
\text{otherwise.}
\end{cases}
\end{equation}

For the multiple Dirichlet series of the function $\varrho_{r,k}$ we
have the next result:

\begin{theorem} \label{Th_Dir_ser} Let $r\ge k\ge 2$ and let $s_i\in \C$ {\rm ($1\le i\le r$)}. If $\Re s_i>1$ {\rm ($1\le i\le r$)}, then
\begin{equation*}
\sum_{n_1,\ldots,n_r=1}^{\infty}
\frac{\varrho_{r,k}(n_1,\ldots,n_r)}{n_1^{s_1}\cdots n_r^{s_r}}=
\zeta(s_1)\cdots \zeta(s_r) D_{r,k}(s_1,\ldots,s_r),
\end{equation*}
where
\begin{equation*}
D_{r,k}(s_1,\ldots,s_r)= \prod_p \left( 1- \sum_{j=k}^r (-1)^{j-k}
\binom{j-1}{k-1} e_j(p^{s_1},\ldots,p^{s_r})\right)
\end{equation*}
is absolutely convergent if and only if $\Re (s_{i_1}+\cdots+
s_{i_j})>1$ for every $1\le i_1< \ldots < i_j\le r$ with $k\le j\le
r$.
\end{theorem}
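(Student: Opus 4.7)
The plan is to exploit the multiplicativity of $\varrho_{r,k}$ as an $r$-variable arithmetic function, which factors the multiple Dirichlet series as an Euler product
\begin{equation*}
\sum_{n_1,\ldots,n_r=1}^{\infty}\frac{\varrho_{r,k}(n_1,\ldots,n_r)}{n_1^{s_1}\cdots n_r^{s_r}} \;=\; \prod_p L_p,\qquad L_p \;=\; \sum_{\nu_1,\ldots,\nu_r\ge 0} \frac{\varrho_{r,k}(p^{\nu_1},\ldots,p^{\nu_r})}{p^{\nu_1 s_1+\cdots+\nu_r s_r}}.
\end{equation*}
It then suffices to compute each local factor $L_p$ and to peel off $\prod_i(1-p^{-s_i})^{-1}$, the $p$-part of $\zeta(s_1)\cdots\zeta(s_r)$. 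By \eqref{varrho_val}, $L_p$ is the sum restricted to tuples with at most $k-1$ positive exponents; splitting by the support $S=\{i:\nu_i\ge 1\}$ and summing the geometric series in each active coordinate gives
\begin{equation*}
L_p \;=\; \sum_{\substack{S\subseteq\{1,\ldots,r\}\\ |S|\le k-1}} \prod_{i\in S}\frac{p^{-s_i}}{1-p^{-s_i}},
\end{equation*}
so that $D_p:=L_p\prod_i(1-p^{-s_i})=\sum_{|S|\le k-1}\prod_{i\in S}p^{-s_i}\prod_{i\notin S}(1-p^{-s_i})$ is the $p$-factor I must match to the claimed expression.

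Writing $x_i:=p^{-s_i}$ and observing that the same expression summed over \emph{all} subsets $S\subseteq\{1,\ldots,r\}$ equals $\prod_i(x_i+(1-x_i))=1$, I get $D_p=1-\sum_{|S|\ge k}\prod_{i\in S}x_i\prod_{i\notin S}(1-x_i)$. Expanding the factors $(1-x_i)$ for $i\notin S$ and regrouping each monomial by its total support $U=S\sqcup T$ (with $T$ indexing the chosen $-x_i$ contributions), the coefficient of $\prod_{i\in U}x_i$ in the $|S|=j$ slice is $(-1)^{|U|-j}\binom{|U|}{j}$, since $|U|=j+|T|$ and the pair $(S,T)$ is determined by selecting the $j$ elements of $U$ assigned to $S$. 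Summing over $j\ge k$ and invoking the binomial identity
\begin{equation*}
\sum_{j=k}^{m} (-1)^{m-j}\binom{m}{j} \;=\; (-1)^{m-k}\binom{m-1}{k-1}
\end{equation*}
(which follows either from the vanishing of the full alternating sum together with $\sum_{j=0}^{k-1}(-1)^j\binom{m}{j}=(-1)^{k-1}\binom{m-1}{k-1}$, or by a short induction on $m$ using Pascal's rule), and finally collecting monomials of equal degree into the elementary symmetric polynomial $e_m(x_1,\ldots,x_r)$, I recover exactly the factor displayed in the theorem. This combinatorial reorganization is the one nontrivial step.

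For the absolute-convergence statement, I would note that $1-D_p$ is a finite linear combination, with the nonzero real coefficients $(-1)^{j-k}\binom{j-1}{k-1}$, of the monomials $p^{-(s_{i_1}+\cdots+s_{i_j})}$ indexed by the tuples $1\le i_1<\cdots<i_j\le r$ with $k\le j\le r$. For sufficiency, the triangle inequality bounds $|1-D_p|$ by the sum of the moduli $p^{-\Re(s_{i_1}+\cdots+s_{i_j})}$ of these monomials; summability over $p$ of each such term is equivalent to $\Re(s_{i_1}+\cdots+s_{i_j})>1$, which yields $\sum_p|1-D_p|<\infty$ and hence absolute convergence of the Euler product. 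For necessity, if some $\Re(s_{i_1}+\cdots+s_{i_j})\le 1$, then a tuple minimizing this real part isolates a monomial whose modulus dominates $|1-D_p|$ as $p\to\infty$ and forces $\sum_p|1-D_p|=\infty$. The expected main obstacle is thus the combinatorial identity leading to the $(-1)^{j-k}\binom{j-1}{k-1}$ coefficients; once that is in hand, the Euler-product factorization and the convergence accounting are routine.
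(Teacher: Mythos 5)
Your proposal is correct, and its skeleton --- Euler product via multiplicativity, evaluation of the local factor by summing over the support $S=\{i:\nu_i\ge 1\}$ and the resulting geometric series, then peeling off $\prod_i(1-p^{-s_i})^{-1}$ --- is exactly the paper's. Where you genuinely diverge is in the proof of the key polynomial identity
\begin{equation*}
\sum_{\substack{S\subseteq\{1,\ldots,r\}\\ |S|\le k-1}}\prod_{i\in S}x_i\prod_{i\notin S}(1-x_i)\;=\;1-\sum_{j=k}^r(-1)^{j-k}\binom{j-1}{k-1}e_j(x_1,\ldots,x_r).
\end{equation*}
The paper isolates this as Lemma \ref{lemma_pol_id} and proves it by a longer route: expanding each product via Lemma \ref{lemma_prod_id}, regrouping, recognizing the inner sums as the values $f^{(q)}(1)/q!$ of derivatives of $f(x)=\prod_j(x-x_j)$, and finishing with the Vandermonde identity. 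Your complement trick --- the sum over \emph{all} $S$ equals $1$, so one may instead expand $\sum_{|S|\ge k}$ directly monomial by monomial, the coefficient of the squarefree monomial $\prod_{i\in U}x_i$ being $\sum_{j=k}^{|U|}(-1)^{|U|-j}\binom{|U|}{j}=(-1)^{|U|-k}\binom{|U|-1}{k-1}$ --- is shorter and entirely correct (each squarefree monomial determines $U$, and the count $\binom{|U|}{j}$ of decompositions $U=S\sqcup T$ with $|S|=j$ is right), and it automatically gives coefficient $0$ when $|U|<k$. One small caveat: in the necessity half of the convergence claim, the assertion that a minimizing tuple ``isolates a monomial whose modulus dominates $|1-D_p|$'' needs a word about ties, since several index sets can share the minimal real part (or even the same exponent sum) and one must rule out cancellation among the corresponding oscillating terms before concluding $\sum_p|1-D_p|=\infty$; the paper, however, offers no proof of the convergence statement at all, so you are already more explicit than the source on this point.
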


In the case $k=2$, Theorem \ref{Th_Dir_ser} was deduced by the
author \cite[Sect.\ 5.1]{Tot2014}, based on an identity concerning a
generalization of the Busche-Ramanujan identity. See \cite[Eq.\
(4.2)]{Tot2013}.

We prove the following asymptotic formula:

\begin{theorem} \label{Th_main} If $r\ge k\ge 2$, then
\begin{equation*}
\sum_{n_1,\ldots,n_r\le x} \varrho_{r,k}(n_1,\ldots,n_r) = A_{r,k}
x^r + O\left(R_{r,k}(x)\right),
\end{equation*}
where
\begin{equation} \label{A_rk}
A_{r,k}=\prod_p \left(1-\sum_{j=k}^r (-1)^{j-k}
\binom{r}{j}\binom{j-1}{k-1}\frac1{p^j}\right)
\end{equation}
and
\begin{equation} \label{R_rk}
R_{r,k}(x) = \begin{cases} x^{r-1}, & \text{ if } \ r\ge k\ge 3, \\
x^{r-1} (\log x)^{r-1}, & \text{ if } \ r\ge k=2.
\end{cases}
\end{equation}
\end{theorem}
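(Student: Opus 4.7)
The plan is to apply the convolution method. By Theorem \ref{Th_Dir_ser}, at the Dirichlet-series level we have the factorization $\varrho_{r,k}=\mathbf{1}*g$, where $\mathbf{1}(n_1,\ldots,n_r)\equiv 1$ and $g$ is the multiplicative (in $r$ variables) function whose multiple Dirichlet series is $D_{r,k}$. Inspecting the local Euler factor shows that $g(p^{\nu_1},\ldots,p^{\nu_r})$ vanishes unless every $\nu_i\in\{0,1\}$ and the number $j$ of ones is either $0$ or lies in $\{k,\ldots,r\}$; in the latter case $|g|=\binom{j-1}{k-1}\le\binom{r-1}{k-1}$. Hence $g$ is supported on squarefree tuples $(d_1,\ldots,d_r)$ in which every prime dividing $\lcm(d_1,\ldots,d_r)$ divides at least $k$ of the $d_i$.

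Interchanging summation and writing $\lfloor x/d_i\rfloor=x/d_i-\{x/d_i\}$, one obtains
$$\sum_{n_i\le x}\varrho_{r,k}(n)=\sum_{S\subseteq[r]}(-1)^{|S|}x^{r-|S|}\sum_{d_i\le x}g(d)\prod_{i\in S}\{x/d_i\}\prod_{i\notin S}\frac{1}{d_i}.$$
The $S=\emptyset$ term contributes $x^r\sum_{d\ge 1}g(d)/(d_1\cdots d_r)$ minus a tail $x^r\sum_{\exists d_i>x}g(d)/\prod d_i$. The full series is $D_{r,k}(1,\ldots,1)$, absolutely convergent by Theorem \ref{Th_Dir_ser}, and specializing the $e_j$'s at $s_i=1$ identifies it with the product \eqref{A_rk}, giving the leading term $A_{r,k}x^r$. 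The contributions from $S\ne\emptyset$ are bounded in absolute value by
$$E_S:=x^{r-|S|}\sum_{d_i\le x}|g(d)|\prod_{i\notin S}\frac{1}{d_i}.$$

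The key estimate exploits multiplicativity of $|g|$: for fixed squarefree $d_1$,
$$\sum_{d_2,\ldots,d_r\ge 1}\frac{|g(d_1,\ldots,d_r)|}{d_2\cdots d_r}\ll\frac{\binom{r-1}{k-1}^{\omega(d_1)}}{d_1^{k-1}},$$
since the corresponding Euler product has local factor $\asymp\binom{r-1}{k-1}/p^{k-1}$ at primes $p\mid d_1$ and $1+O(p^{-k})$ at primes $p\nmid d_1$ (both products being convergent since $k\ge 2$). Inserting this into the dominant error term $E_S$ with $|S|=1$ yields $x^{r-1}\sum_{d\le x}\binom{r-1}{k-1}^{\omega(d)}/d^{k-1}$, which is $\ll x^{r-1}$ for $k\ge 3$ (the series converges absolutely, thanks to $k-1\ge 2$) and $\ll x^{r-1}(\log x)^{r-1}$ for $k=2$ (by the standard estimate $\sum_{n\le x}(r-1)^{\omega(n)}/n\ll(\log x)^{r-1}$). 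The tail is absorbed: the same Euler-product bound and partial summation yield $\ll x^{r-(k-1)}(\log x)^{r-1}$, which is within $R_{r,k}(x)$.

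The principal obstacle is the treatment of the $|S|\ge 2$ contributions in the $k=2$ case, where the Dirichlet series of $|g|$ with $|S|$ of the variables set to $0$ formally diverges (indeed for $k=2$ and $\{i,j\}\subseteq S$ the convergence condition $s_i+s_j>1$ of Theorem \ref{Th_Dir_ser} already fails). These are handled by a refined Euler-product analysis, classifying each prime by the subset $\{i:p\mid d_i\}\subseteq[r]$ it appears in and using the truncations $d_i\le x$ for $i\in S$ to supply the missing damping; in combination with the already established bounds for $|S|=1$, this shows that all $S\ne\emptyset$ contributions satisfy $\ll R_{r,k}(x)$ and completes the proof.
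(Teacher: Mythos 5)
Your overall strategy is the paper's: write $\varrho_{r,k}=\mathbf{1}*\psi_{r,k}$ with $\psi_{r,k}$ the function behind $D_{r,k}$, expand $\lfloor x/d_i\rfloor$, identify the main term with $D_{r,k}(1,\ldots,1)=A_{r,k}$, and estimate the truncation errors. Your treatment of the tail of the main term is in fact cleaner than the paper's: the conditional Euler-product bound $\sum_{d_2,\ldots,d_r\ge 1}|\psi_{r,k}(d_1,\ldots,d_r)|/(d_2\cdots d_r)\ll \prod_{p\mid d_1}\bigl(\tfrac{r-1}{p}+O(p^{-2})\bigr)$ for squarefree $d_1$ (for general $k$ the local factor is $\asymp\binom{r-1}{k-1}p^{-(k-1)}$), followed by partial summation over $d_1>x$, replaces the paper's case analysis over the set $I$ of indices with $d_i>x$, including its ad hoc splitting of the case $|I|=2$ at $d_1=x^{3/2}$. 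One caveat for $k=2$: keep the local factor in the form $\tfrac{r-1}{p}(1+O(1/p))$ and bound the sum over $d_1\le x$ by $\prod_{p\le x}\bigl(1+\tfrac{r-1}{p}+O(p^{-2})\bigr)\ll(\log x)^{r-1}$; absorbing the $1+O(1/p)$ into a single per-prime constant $C>r-1$ and then invoking $\sum_{n\le x}C^{\omega(n)}/n\ll(\log x)^{C}$ would lose the exponent $r-1$.

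The genuine gap is the one you yourself flag: the terms $E_S$ with $|S|\ge 2$ when $k=2$. You assert these are handled by ``a refined Euler-product analysis, classifying each prime by the subset $\{i:p\mid d_i\}$,'' but that route does not obviously deliver the stated bound: with two variables unweighted, the local factor at every prime is at least $1+|\psi_{r,2}(p,p,1,\ldots,1)|=2$, so the unrestricted Euler product is useless, and the conditional version (fix $d_1,d_2$, sum over the remaining variables, then sum over $d_1,d_2\le x$ via their gcd) produces a bound of the shape $x^{r-1}(\log x)^{c}$ with $c$ strictly larger than $r-1$ once $r\ge 4$. The correct and very short fix is the paper's: since $d_i\le x$ one has $\{x/d_i\}\le 1\le x/d_i$, so applying this to all but one index of $S$ gives $E_S\le x^{r-1}\sum_{d_1,\ldots,d_r\le x}|\psi_{r,k}(d_1,\ldots,d_r)|/(d_1\cdots d_{r-1})$ (after relabelling), i.e. every $E_S$ is dominated by the $|S|=1$ quantity you already control. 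With that one line inserted, your argument is complete and correct.
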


For $k\ge 3$ the error term $R_{r,k}(x)$ is better than in
\eqref{as_Hu}, obtained by J.~Hu \cite{Hu2013}. Note also that
$A_{r,k}=P_{r,k}$, given by \eqref{P_rk}, which follows by some
simple properties of the binomial coefficients.

\section{Preliminaries}

Consider the polynomial
\begin{equation} \label{polinom}
f(x) =\prod_{j=1}^r (x-x_j) = \sum_{j=0}^r (-1)^j e_j(x_1,\dots,x_r)
x^{r-j}.
\end{equation}

We will use that its $m$-th derivative is
\begin{equation} \label{deriv_sum}
f^{(m)}(x)= m! \sum_{j=0}^r (-1)^j \binom{r-j}{m} e_j(x_1,\dots,x_r)
x^{r-j-m} \quad (0\le m \le r),
\end{equation}
and on the other hand
\begin{equation} \label{deriv_prod}
f^{(m)}(x)= m! \sum_{1\le i_1<\ldots < i_m\le r}
\prod_{\substack{j=1\\ j\ne i_1,\ldots,i_m}}^r (x-x_j) \quad (0\le m
\le r).
\end{equation}

We also need the following auxiliary results:

\begin{lemma} \label{lemma_prod_id} If $a_j\in \C$ {\rm ($1\le j\le r$)}, then
\begin{equation*}
\prod_{j=1}^r a_j = \sum_{\ell=0}^r (-1)^{\ell} \sum_{1\le
i_1<\ldots<i_{\ell}\le r} (1-a_{i_1})\cdots (1-a_{i_{\ell}}),
\end{equation*}
where the term for $\ell=0$ is considered to be $1$.
\end{lemma}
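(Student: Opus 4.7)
The plan is to obtain this identity by a straightforward substitution into the polynomial expansion \eqref{polinom}. The observation is that the right-hand side of the claim is, up to a change of variables, precisely the expansion of $\prod_{j=1}^r (1-b_j)$ as a sum of elementary symmetric polynomials.

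Concretely, I would introduce new variables $b_j := 1 - a_j$ for $1\le j\le r$. Then
\[
\prod_{j=1}^r a_j = \prod_{j=1}^r (1 - b_j),
\]
and the desired identity reduces to the well-known expansion
\[
\prod_{j=1}^r (1-b_j) = \sum_{\ell=0}^r (-1)^\ell e_\ell(b_1,\ldots,b_r) = \sum_{\ell=0}^r (-1)^\ell \sum_{1\le i_1<\ldots<i_\ell\le r} b_{i_1}\cdots b_{i_\ell},
\]
after substituting back $b_{i_t} = 1 - a_{i_t}$. Alternatively, and even more in the spirit of the Preliminaries, one can simply evaluate the polynomial in \eqref{polinom} at $x=1$ with $x_j = 1-a_j$: the product form yields $f(1)=\prod_j a_j$, and the symmetric-function expansion on the right-hand side of \eqref{polinom} yields exactly $\sum_{\ell=0}^r (-1)^\ell e_\ell(1-a_1,\ldots,1-a_r)$, which matches the claimed right-hand side.

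There is no real obstacle here; the identity is a restatement of the expansion of $\prod(1-b_j)$ in elementary symmetric polynomials, and the only thing to check is the convention that the $\ell=0$ term equals $1$, which is consistent with $e_0 \equiv 1$ as set in the paper. Accordingly, the write-up can be done in a couple of lines by invoking \eqref{polinom} directly.
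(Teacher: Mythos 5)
Your proposal is correct and coincides with the paper's own proof, which likewise obtains the identity by evaluating \eqref{polinom} at $x=1$ with $x_j=1-a_j$. Nothing further is needed.
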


\begin{proof} Follows from \eqref{polinom} by putting $x=1$ and $x_j=1-a_j$ ($1\le j\le r$).
\end{proof}

\begin{lemma} \label{lemma_pol_id} We have the polynomial identity
\begin{equation} \label{pol_id}
\sum_{j=0}^{k-1} \sum_{1\le i_1<\ldots <i_j\le r} x_{i_1}\cdots
x_{i_j} \prod_{\substack{\ell=1\\ \ell \ne i_1,\ldots,i_j}}^r
\left(1-x_{\ell} \right)= 1- \sum_{j=k}^r (-1)^{j-k}
\binom{j-1}{k-1} e_j(x_1,\ldots,x_r),
\end{equation}
where on the left hand side the term for $j=0$ is considered to be
$\prod_{\ell=1}^r \left(1-x_{\ell} \right)$.
\end{lemma}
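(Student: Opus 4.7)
My plan is to prove \eqref{pol_id} by introducing the auxiliary generating polynomial
$$F(t) = \prod_{\ell=1}^r\bigl((1-x_\ell)+tx_\ell\bigr) = \prod_{\ell=1}^r\bigl(1+(t-1)x_\ell\bigr)$$
in an extra variable $t$. Expanding the first product over the $2^r$ choices of $(1-x_\ell)$ versus $tx_\ell$ shows that the coefficient of $t^j$ in $F(t)$ is exactly the inner sum appearing for index $j$ on the left-hand side of \eqref{pol_id}; in particular the $j=0$ coefficient recovers the stated convention $\prod_{\ell=1}^r(1-x_\ell)$. Expanding the second product via the definition of the elementary symmetric polynomials yields the alternative form
$$F(t) = \sum_{i=0}^r (t-1)^i\, e_i(x_1,\ldots,x_r),$$
from which $[t^j]F(t) = \sum_{i=j}^r (-1)^{i-j}\binom{i}{j}\,e_i(x_1,\ldots,x_r)$.

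Summing the coefficient expressions over $j=0,\ldots,k-1$ and swapping the order of summation, the left-hand side of \eqref{pol_id} rewrites as $\sum_{i=0}^r c_i\,e_i(x_1,\ldots,x_r)$, where
$$c_i = \sum_{j=0}^{\min(k-1,\,i)}(-1)^{i-j}\binom{i}{j}.$$
The remaining task is the elementary evaluation of $c_i$: clearly $c_0=1$; for $1\le i\le k-1$ one has $c_i=(-1)^i\sum_{j=0}^i(-1)^j\binom{i}{j}=(-1)^i(1-1)^i=0$ by the binomial theorem; and for $i\ge k$ one obtains $c_i=-(-1)^{i-k}\binom{i-1}{k-1}$ via the classical alternating partial row sum identity
$$\sum_{j=0}^{k-1}(-1)^j\binom{i}{j}=(-1)^{k-1}\binom{i-1}{k-1},$$
which is straightforward to verify by induction on $k$ using Pascal's rule. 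Substituting back matches exactly the right-hand side of \eqref{pol_id}.

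The only real obstacle is careful sign accounting and recognizing that the combinatorics collapses to the alternating partial row sum above; no analytical machinery is required, as everything is formal polynomial manipulation. As a sanity check, note that setting $t=1$ in $F(t)$ gives $F(1)=1$, reflecting the identity $\sum_{j=0}^r h_j(x_1,\ldots,x_r)=1$, which is consistent with the boundary case $k=r+1$ of \eqref{pol_id}.
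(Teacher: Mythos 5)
Your proof is correct. Every step checks out: the coefficient of $t^j$ in $F(t)=\prod_{\ell}\bigl((1-x_\ell)+tx_\ell\bigr)$ is indeed the inner sum of index $j$ on the left of \eqref{pol_id}; the alternative expansion $F(t)=\sum_{i=0}^r(t-1)^ie_i$ gives $[t^j]F(t)=\sum_{i\ge j}(-1)^{i-j}\binom{i}{j}e_i$; and the three-case evaluation of $c_i$ (namely $c_0=1$, $c_i=0$ for $1\le i\le k-1$ by the binomial theorem, and $c_i=-(-1)^{i-k}\binom{i-1}{k-1}$ for $i\ge k$ by the partial alternating row sum) reproduces the right-hand side exactly.

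Your route is genuinely different from, and noticeably slicker than, the paper's. The paper expands each product $\prod_{\ell\ne i_1,\ldots,i_j}(1-x_\ell)$ via the identity of Lemma \ref{lemma_prod_id}, performs a delicate regrouping by the number of ``missing'' factors, reinterprets the resulting sums as the derivatives $f^{(q)}(1)$ of $f(x)=\prod_j(x-x_j)$ using both \eqref{deriv_sum} and \eqref{deriv_prod}, and finally invokes the Vandermonde identity on top of the same partial row sum $\sum_{j=q}^{k-1}(-1)^j\binom{r-q}{j-q}=(-1)^{k-1}\binom{r-q-1}{k-q-1}$ that you use. Your generating-polynomial $F(t)$ packages all of that bookkeeping into a single coefficient extraction in an auxiliary variable, so the only nontrivial input is the one classical identity $\sum_{j=0}^{k-1}(-1)^j\binom{i}{j}=(-1)^{k-1}\binom{i-1}{k-1}$; no derivative identities and no Vandermonde are needed. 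What the paper's approach buys is that it stays within the toolkit (the polynomial $f$ and its derivatives) already set up in the Preliminaries for other purposes; what yours buys is brevity and a transparent combinatorial meaning for both sides as coefficients of the same polynomial in $t$. One tiny cosmetic point: the boundary case you invoke in your sanity check, $k=r+1$, lies outside the range $r\ge k$ assumed in the lemma, but as a consistency check of the identity itself it is harmless.
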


Note that the left hand side of \eqref{pol_id} is a symmetric
polynomial in $x_1,\ldots,x_r$ and the right hand side shows how it
can be written as a polynomial of the elementary symmetric
polynomials.

\begin{proof} By using Lemma \ref{lemma_prod_id},
\begin{equation*}
S:= \sum_{j=0}^{k-1} \sum_{1\le i_1<\ldots <i_j\le r} x_{i_1}\cdots
x_{i_j} \prod_{\substack{\ell=1\\ \ell \ne i_1,\ldots,i_j}}^r
\left(1-x_{\ell} \right)
\end{equation*}
\begin{equation*}
= \sum_{j=0}^{k-1} \sum_{1\le i_1<\ldots <i_j\le r} \sum_{m=0}^r
(-1)^m \sum_{1\le t_1<\ldots<t_m\le r} (1-x_{i_{t_1}})\cdots
(1-x_{i_{t_m}}) \prod_{\substack{\ell=1\\ \ell\ne i_1,\ldots,i_j}}^r
\left(1-x_{\ell} \right).
\end{equation*}

In the last product a number of $j$ factors are missing from the
factors $1-x_1,\ldots,1-x_r$. But a number of $m$ factors from the
missing ones are present in front of the last product. Hence the
number of missing factors is $q=j-m$, where $0\le q\le j$. We obtain
\begin{equation*}
S= \sum_{j=0}^{k-1} \sum_{1\le i_1<\ldots <i_j\le r} \sum_{q=0}^j
(-1)^{j-q} \sum_{1\le u_1<\ldots<u_q\le j} \prod_{\substack{\ell=1\\
\ell\ne i_{u_1},\ldots,i_{u_q}}}^r \left(1-x_{\ell} \right)
\end{equation*}
\begin{equation*}
= \sum_{j=0}^{k-1} (-1)^j \sum_{q=0}^j (-1)^q \sum_{1\le
u_1<\ldots<u_q\le j} \sum_{1\le i_1<\ldots
<i_j\le r} \prod_{\substack{\ell=1\\
\ell\ne i_{u_1},\ldots,i_{u_q}}}^r \left(1-x_{\ell} \right),
\end{equation*}
by regrouping the terms. Here for fixed $u_1,\ldots,u_q$ the values
$a_1=i_{u_1},\ldots,a_q=i_{u_q}$ are also fixed and the other $r-q$
values of $i_1,\ldots,i_j$ can be selected in $\binom{r-q}{j-q}$
ways. Therefore,
\begin{equation*}
S = \sum_{j=0}^{k-1} (-1)^j \sum_{q=0}^j (-1)^q \binom{r-q}{j-q}
\sum_{1\le a_1<\ldots<a_q\le r} \prod_{\substack{\ell=1\\
\ell\ne a_1,\ldots,a_q}}^r \left(1-x_{\ell} \right)
\end{equation*}
\begin{equation*}
= \sum_{q=0}^{k-1} (-1)^q \sum_{1\le a_1<\ldots<a_q\le r} \left( \prod_{\substack{\ell=1\\
\ell\ne a_1,\ldots,a_q}}^r \left(1-x_{\ell} \right)\right)
\sum_{j=q}^{k-1} (-1)^j \binom{r-q}{j-q},
\end{equation*}
where the last sum is $(-1)^{k-1}\binom{r-q-1}{k-q-1}$ and we deduce
\begin{equation*}
S = \sum_{q=0}^{k-1} (-1)^{q+k-1} \binom{r-q-1}{k-q-1} \sum_{1\le
a_1<\ldots<a_q\le r} \prod_{\substack{\ell=1\\ \ell\ne
a_1,\ldots,a_q}}^r \left(1-x_{\ell} \right).
\end{equation*}

Now using the identities \eqref{deriv_prod} and \eqref{deriv_sum}
for $x=1$ we conclude
\begin{equation*}
S = \sum_{q=0}^{k-1} (-1)^{q+k-1} \binom{r-q-1}{k-q-1} \frac1{q!}
f^{(q)}(1)
\end{equation*}
\begin{equation*}
= \sum_{q=0}^{k-1} (-1)^{q+k-1} \binom{r-q-1}{k-q-1} \sum_{j=0}^r
(-1)^j \binom{r-j}{q} e_j(x_1,\ldots,x_r)
\end{equation*}
\begin{equation*}
= \sum_{j=0}^r (-1)^{j-k+1} e_j(x_1,\ldots,x_r) \sum_{q=0}^{k-1}
(-1)^q \binom{r-q-1}{(k-1)-q} \binom{r-j}{q},
\end{equation*}
where the last sum is $\binom{j-1}{k-1}$ by the Vandermonde
identity. Hence
\begin{equation*}
S = \sum_{j=0}^r (-1)^{j-k+1} e_j(x_1,\ldots,x_r)\binom{j-1}{k-1}=
1- \sum_{j=k}^r (-1)^{j-k} \binom{j-1}{k-1} e_j(x_1,\ldots,x_r),
\end{equation*}
which completes the proof.
\end{proof}

\section{Proofs}

\begin{proof}[Proof of Theorem {\rm \ref{Th_Dir_ser}}]
The function $(n_1,\ldots,n_r)\mapsto \varrho_{r,k}(n_1,\ldots,n_r)$
is multiplicative, hence its Dirichlet series can be expanded into an
Euler product. Using \eqref{varrho_val} we deduce
\begin{equation*}
\sum_{n_1,\ldots,n_r=1}^{\infty}
\frac{\varrho_{r,k}(n_1,\ldots,n_r)}{n_1^{s_1}\cdots n_r^{s_r}}=
\prod_p \sum_{\nu_1,\ldots,\nu_r=0}^{\infty}
\frac{\varrho_{r,k}(p^{\nu_1},\ldots,p^{\nu_r})}{p^{\nu_1 s_1+\cdots
+\nu_r s_r}}
\end{equation*}
\begin{equation*}
= \prod_p \left(1+ \sum_{j=1}^{k-1} \sum_{1\le i_1<\ldots <i_j\le r}
\sum_{\nu_{i_1},\ldots,\nu_{i_j}=1}^{\infty}
\frac1{p^{\nu_{i_1}s_{i_1} +\cdots +\nu_{i_j}s_{i_j}}}\right)
\end{equation*}
\begin{equation*}
= \prod_p \left(1+ \sum_{j=1}^{k-1} \sum_{1\le i_1<\ldots <i_j\le r}
\frac1{p^{s_{i_1}}} \left(1-\frac1{p^{s_{i_1}}} \right)^{-1} \cdots
\frac1{p^{s_{i_j}}} \left(1-\frac1{p^{s_{i_j}}} \right)^{-1}\right)
\end{equation*}
\begin{equation*}
= \zeta(s_1)\cdots \zeta(s_r) \prod_p \left( \prod_{\ell=1}^r
\left(1-\frac1{p^{s_{\ell}}}\right) + \sum_{j=1}^{k-1} \sum_{1\le
i_1<\ldots <i_j\le r} \frac1{p^{s_{i_1}+\cdots + s_{i_j}}}
\prod_{\substack{\ell=1\\ \ell \ne i_1,\ldots,i_j}}^r
\left(1-\frac1{p^{s_{\ell}}} \right)\right)
\end{equation*}
\begin{equation*}
=\zeta(s_1)\cdots \zeta(s_r)\prod_p \left(1- \sum_{j=k}^r (-1)^{j-k}
\binom{j-1}{k-1} e_j(p^{s_1},\ldots,p^{s_r})\right),
\end{equation*}
by using Lemma \ref{lemma_pol_id} for $x_1=p^{s_1},\ldots,
x_r=p^{s_r}$ in the last step.
\end{proof}

\begin{proof}[Proof of Theorem {\rm \ref{Th_main}}] According to Theorem \ref{Th_Dir_ser}, for every $n_1,\ldots,n_r\in
\N$,
\begin{equation} \label{convo_id}
\varrho_{r,k}(n_1,\ldots,n_r) = \sum_{d_1\mid n_1, \ldots, d_r\mid
n_r} \psi_{r,k}(d_1,\ldots,d_r),
\end{equation}
where
\begin{equation*}
\sum_{n_1,\ldots,n_r=1}^{\infty}
\frac{\psi_{r,k}(n_1,\ldots,n_r)}{n_1^{s_1}\cdots n_r^{s_r}}=
D_{r,k}(s_1,\ldots,s_r).
\end{equation*}

The function $\psi_{r,k}$ is also multiplicative, symmetric in the
variables and for any prime powers $p^{\nu_1},\ldots,p^{\nu_r}$,
\begin{equation} \label{psi}
\psi_{r,k}(p^{\nu_1},\ldots,p^{\nu_r})=
\begin{cases} 1, & \nu_1=\ldots=\nu_r=0,\\
(-1)^{j-k+1}\binom{j-1}{k-1}, & \nu_1,\ldots,\nu_r\in \{0,1\}, \ j:=\nu_1+\ldots+\nu_r \ge k,\\
0, & \text{otherwise}.
\end{cases}
\end{equation}

Note that $\psi_{r,k}(p^{\nu_1},\ldots,p^{\nu_r})=0$ provided that
$\nu_i\ge 2$ for at least one $1\le i\le r$, or
$\nu_1,\ldots,\nu_r\in \{0,1\}$ and $\nu_1+\ldots+\nu_r <k$. For
$k\ge 2$ one has $\psi_{r,k}(p,1,\ldots,1)=0$ and for $k\ge 3$ one
has $\psi_{r,k}(p,p,1,\ldots,1)=0$, where $p$ is any prime.

From \eqref{convo_id} we deduce
\begin{equation*}
\sum_{n_1,\ldots,n_r\le x } \varrho_{r,k}(n_1,\ldots,n_r) =
\sum_{d_1, \ldots, d_r\le x} \psi_{r,k}(d_1,\ldots,d_r) \left\lfloor
\frac{x}{d_1}\right\rfloor \cdots \left\lfloor
\frac{x}{d_r}\right\rfloor
\end{equation*}
\begin{equation*}
= \sum_{d_1, \ldots, d_r\le x} \psi(d_1,\ldots,d_r)
\left(\frac{x}{d_1}+ O(1) \right) \cdots
\left(\frac{x}{d_r}+O(1)\right)
\end{equation*}
\begin{equation} \label{main_term}
= x^r \sum_{d_1, \ldots, d_r\le x}
\frac{\psi_{r,k}(d_1,\ldots,d_r)}{d_1\cdots d_r} + Q_{r,k}(x),
\end{equation}
with
\begin{equation*}
Q_{r,k}(x)\ll \sum_{u_1,\ldots,u_r} x^{u_1+\cdots+u_r}
\sum_{d_1,\ldots, d_r\le x}
\frac{|\psi_{r,k}(d_1,\ldots,d_r)|}{d_1^{u_1}\cdots d_r^{u_r}},
\end{equation*}
where the first sum is over $u_1,\ldots,u_r\in \{0, 1\}$ such that
at least one $u_i$ is $0$. Let $u_1,\ldots,u_r$ be fixed and assume
that $u_r=0$. Since $(x/d_i)^{u_i}\le x/d_i$ for every $i$, we have
\begin{equation*}
A:= x^{u_1+\cdots+u_r} \sum_{d_1,\ldots, d_r\le x}
\frac{|\psi_{r,k}(d_1,\ldots,d_r)|}{d_1^{u_1}\cdots d_r^{u_r}}\le
x^{r-1} \sum_{d_1,\ldots, d_r\le x}
\frac{|\psi_{r,k}(d_1,\ldots,d_r)|}{d_1\cdots d_{r-1}}
\end{equation*}

Assume that $k\ge 3$. Then
\begin{equation*}
A \le x^{r-1} \sum_{d_1,\ldots, d_r =1}^{\infty}
\frac{|\psi_{r,k}(d_1,\ldots,d_r)|}{d_1\cdots d_{r-1}}\ll x^{r-1},
\end{equation*}
since the series $D_{r,k}(1,\ldots,1,0)$ is absolutely convergent
for $k\ge 3$ by Theorem \ref{Th_Dir_ser}. We obtain that
\begin{equation} \label{Q_rk}
Q_{r,k}(x)\ll x^{r-1} \quad (k\ge 3).
\end{equation}

If $k=2$, then
\begin{equation} \label{estimate_A_Mertens}
A \le x^{r-1} \prod_{p\le x} \sum_{\nu_1,\ldots, \nu_r=0}^{\infty}
\frac{|\psi_{r,2}(p^{\nu_1},\ldots,p^{\nu_r})|}{p^{\nu_1+\cdots
+\nu_{r-1}}}
\end{equation}
\begin{equation*}
= x^{r-1} \prod_{p\le x} \left(1+\frac{r-1}{p}+
\frac{c_2}{p^2}+\cdots +\frac{c_{r-1}}{p^{r-1}}\right),
\end{equation*}
by \eqref{psi}, where $c_2,\ldots,c_{r-1}$ are certain positive
integers, using also that we have $p$ in the denominator if and only
if $\nu_r=1$ and exactly one of $\nu_1,\ldots,\nu_{r-1}$ is $1$, the
rest being $0$, which occurs $r-1$ times. We deduce that
\begin{equation*}
A\ll x^{r-1} \prod_{p\le x} \left(1+\frac1{p}\right)^{r-1}\ll
x^{r-1} (\log x)^{r-1}
\end{equation*}
by Mertens' theorem. This shows that
\begin{equation} \label{Q_r2}
Q_{r,2}(x)\ll x^{r-1}(\log x)^{r-1}.
\end{equation}

Furthermore, for the main term of \eqref{main_term} we have
\begin{equation*}
\sum_{d_1, \ldots, d_r\le x}
\frac{\psi_{r,k}(d_1,\ldots,d_r)}{d_1\cdots d_r}
\end{equation*}
\begin{equation} \label{roro}
= \sum_{d_1,\ldots,d_r=1}^{\infty}
\frac{\psi_{r,k}(d_1,\ldots,d_r)}{d_1 \cdots d_r} - \sum_{\emptyset
\ne I \subseteq \{1,\ldots,r\}} \sum_{\substack{d_i>x, \, i\in I\\
d_j\le x, \, j\notin I}} \frac{\psi_{r,k}(d_1,\ldots,d_r)}{d_1\cdots
d_r},
\end{equation}
where the series is convergent by Theorem \ref{Th_Dir_ser} and its
sum is $D_{r,k}(1,\ldots,1)=A_{r,k}$, given by \eqref{A_rk}.

Let $I$ be fixed and assume that $I=\{1,2,\ldots,t\}$, that is
$d_1,\ldots,d_t>x$ and $d_{t+1},\ldots,d_r\le x$, where $t\ge 1$. We
estimate the sum
\begin{equation*}
B:=\sum_{\substack{d_1,\ldots,d_t > x\\ d_{t+1},\ldots,d_r\le x}}
\frac{|\psi_{r,k}(d_1,\ldots,d_r)|}{d_1\cdots d_r}
\end{equation*}
by distinguishing the following cases:

Case i) $k\ge 3$, $t\ge 1$:
\begin{equation*}
B < \frac1{x} \sum_{d_1, \ldots, d_r=1}^{\infty}
\frac{|\psi_{r,k}(d_1,\ldots,d_r)|}{d_2\cdots d_r}\ll \frac1{x},
\end{equation*}
since the series is convergent by Theorem \ref{Th_Dir_ser}.

Case ii) $k=2$, $t\ge 3$: if $0<\varepsilon <1/2$, then
\begin{equation*}
B = \sum_{\substack{d_1,\ldots,d_t> x\\ d_{t+1},\ldots,d_r \le x}}
\frac{|\psi_{r,2}(d_1,\ldots,d_r)| d_1^{\varepsilon-1/2} \cdots
d_t^{\varepsilon-1/2}}{d_1^{1/2+\varepsilon}\cdots
d_t^{1/2+\varepsilon} d_{t+1}\cdots d_r}
\end{equation*}
\begin{equation*}
< x^{t(\varepsilon -1/2)} \sum_{d_1,\ldots,d_r=1}^{\infty}
\frac{|\psi_{r,2}(d_1,\ldots,d_r)|}{d_1^{1/2+\varepsilon}\cdots
d_t^{1/2+\varepsilon} d_{t+1}\cdots d_r} \ll x^{t(\varepsilon
-1/2)},
\end{equation*}
since the series is convergent (for $t\ge 1$). Using that
$t(\varepsilon -1/2)< -1$ for $0<\varepsilon< (t-2)/(2t)$, here we
need $t\ge 3$, we obtain $B\ll \frac1{x}$.

Case iii) $k=2$, $t=1$: Let $d_1>x$, $d_2,\ldots,d_r\le x$ and
consider a prime $p$.  If $p\mid d_i$ for an $i\in \{2,\ldots,r\}$,
then $p\le x$. If $p\mid d_1$ and $p>x$, then $p\nmid d_i$ for every
$i\in \{2,\ldots,r\}$ and $\psi_{r,2}(d_1,\ldots,d_r)=0$ by its
definition \eqref{psi}. Hence it is enough to consider the primes
$p\le x$. We deduce
\begin{equation*}
B < \frac1{x} \sum_{\substack{d_1> x\\ d_{2},\ldots,d_r\le x}}
\frac{|\psi_{r,2}(d_1,\ldots,d_r)|}{d_2\cdots d_r}
\end{equation*}
\begin{equation*}
\le \frac1{x} \prod_{p\le x} \sum_{\nu_1,\ldots, \nu_r=0}^{\infty}
\frac{|\psi_{r,2}(p^{\nu_1},\ldots,p^{\nu_r})|}{p^{\nu_2+\cdots
+\nu_{r}}}\ll \frac1{x}(\log x)^{r-1},
\end{equation*}
similar to the estimate of \eqref{estimate_A_Mertens}.

Case iv) $k=2$, $t=2$: We split the sum $B$ into two sums, namely
\begin{equation*}
B = \sum_{\substack{d_1> x,d_2>x\\ d_3,\ldots,d_r\le x}}
\frac{|\psi_{r,2}(d_1,\ldots,d_r)|}{d_1\cdots d_r}
\end{equation*}
\begin{equation*}
= \sum_{\substack{d_1> x^{3/2},d_2>x\\ d_3,\ldots,d_r\le x}}
\frac{|\psi_{r,2}(d_1,\ldots,d_r)|}{d_1\cdots d_r} +
\sum_{\substack{x^{3/2}\ge d_1> x,d_2>x\\ d_3,\ldots,d_r\le x}}
\frac{|\psi_{r,2}(d_1,\ldots,d_r)|}{d_1\cdots d_r}=: B_1+B_2,
\end{equation*}
say, where
\begin{equation*}
B_1= \sum_{\substack{d_1> x^{3/2},d_2>x\\ d_3,\ldots,d_r\le x}}
\frac{|\psi_{r,2}(d_1,\ldots,d_r)|}{d_1^{1/3}d_2\cdots d_r}
\frac1{d_1^{2/3}}
\end{equation*}
\begin{equation*}
<\frac1{x} \sum_{d_1,\ldots,d_r=1}^{\infty}
\frac{|\psi_{r,2}(d_1,\ldots,d_r)|}{d_1^{1/3}d_2\cdots d_r} \ll
\frac1{x},
\end{equation*}
since the series is convergent. Furthermore,
\begin{equation*}
B_2 <\frac1{x} \sum_{\substack{x^{3/2}\ge d_1, d_2>x \\
d_3,\ldots,d_r\le x}}
\frac{|\psi_{r,2}(d_1,\ldots,d_r)|}{d_1d_3\cdots d_r},
\end{equation*}
where $d_1\le x^{3/2}$, $d_2>x$, $d_3,\ldots,d_r\le x$.
Consider a prime $p$. If $p\mid d_i$ for an $i\in \{1,3,\ldots,r\}$,
then $p\le x^{3/2}$. If $p\mid d_2$ and $p>x^{3/2}$, then $p\nmid
d_i$ for every $i\in \{1,3,\ldots,r\}$ and
$\psi_{r,2}(d_1,\ldots,d_r)=0$ by its definition. Hence it is enough
to consider the primes $p\le x^{3/2}$. We deduce, cf. the estimate
of \eqref{estimate_A_Mertens},
\begin{equation*}
B_2 <\frac1{x} \prod_{p\le x^{3/2}} \sum_{\nu_1,\ldots,
\nu_r=0}^{\infty}
\frac{|\psi_{r,2}(p^{\nu_1},\ldots,p^{\nu_r})|}{p^{\nu_1+\nu_3+\cdots
+\nu_{r}}} \ll \frac1{x}(\log x^{3/2})^{r-1} \ll \frac1{x}(\log
x)^{r-1}.
\end{equation*}

Hence, given any $t\ge 1$, we have $B\ll \frac1{x}$ for $k\ge 3$ and
$B\ll \frac1{x}(\log x)^{r-1}$ for $k=2$. Therefore, by
\eqref{roro},
\begin{equation} \label{second}
\sum_{d_1, \ldots, d_r\le x}
\frac{\psi_{r,k}(d_1,\ldots,d_r)}{d_1\cdots d_r} = A_{r,k}+
O(R_{r,k}(x))
\end{equation}
with the notation \eqref{A_rk} and \eqref{R_rk}.

The proof is complete by putting  together \eqref{main_term},
\eqref{Q_rk}, \eqref{Q_r2} and \eqref{second}.
\end{proof}

\section{Acknowledgement} The author thanks the referee for careful reading and useful comments.

\medskip \medskip

\medskip

\noindent L\'aszl\'o T\'oth \\
Department of Mathematics, University of P\'ecs \\ Ifj\'us\'ag
\'utja 6, H-7624 P\'ecs, Hungary \\ E-mail: {\tt
ltoth@gamma.ttk.pte.hu}

\end{document}